\newcolumntype{C}[1]{>{\centering\hspace{0pt}}p{#1}}
\newtheorem{thm}{Theorem}[section]
\newtheorem{lem}[thm]{Lemma}
\theoremstyle{definition}
\theoremstyle{definition}
\newtheorem*{rmk}{Remark}
\numberwithin{equation}{section}
\title{Free-Boundary Minimal Surfaces of Constant \\ K\"{a}hler Angle in Complex Space Forms}
\author{Jesse Madnick}
\date{November 2020}
\newcommand{\Addresses}
{{  \bigskip
%
  \textsc{National Center for Theoretical Sciences} \par\nopagebreak
  \textsc{National Taiwan University}\par\nopagebreak
  \textsc{Taipei, Taiwan}\par\nopagebreak
 \textit{E-mail address}: \texttt{jmadnick@ncts.ntu.edu.tw}
}}
\begin{document}

\maketitle

\begin{abstract}

In real space forms, Fraser and Schoen proved that a free-boundary minimal disk in a geodesic ball is totally geodesic.  In this note, we consider free-boundary minimal surfaces $\Sigma$ (of any genus) in geodesic balls of complex space forms.  In $\mathbb{CP}^2$, $\mathbb{C}^2$ and $\mathbb{CH}^2$, we show that if $\Sigma$ is Lagrangian, then $\Sigma$ is totally geodesic.  In $\mathbb{CP}^n$, $\mathbb{C}^n$ and $\mathbb{CH}^n$ for $n \geq 2$, we show that if $\Sigma$ has K\"{a}hler angle $\pi/2$, then $\Sigma$ is superminimal.

\end{abstract}

 
\section{Introduction}

\indent \indent In a Riemannian manifold $M$ with boundary, a \textit{free-boundary minimal surface} is a minimal surface $u \colon \Sigma^2 \to M$ with $u(\partial \Sigma) \subset \partial M$ such that $u(\Sigma)$ meets $\partial M$ orthogonally.  Interest in the orthogonality condition comes from the first variation of area.  Indeed, if $u_t \colon \Sigma \to M$ with $u_t(\partial \Sigma) \subset \partial M$ is a one-parameter family of immersions with $u_0 = u$, then
$$\left. \frac{d}{dt}\right|_{t = 0} \text{Area}(u_t(\Sigma)) = -\int_\Sigma \langle H, X \rangle\,\text{vol}_\Sigma + \int_{\partial \Sigma} \langle \mu, X \rangle\,\text{vol}_{\partial \Sigma}$$
where $H$ is the mean curvature of $u(\Sigma)$, where $X$ is the variation vector field, and where $\mu$ is the unit vector field in $T\Sigma$ that is orthogonal to $T(\partial \Sigma)$ and outward-pointing.  This illustrates that $\left. \frac{d}{dt}\right|_{t = 0} \text{Area}(u_t(\Sigma)) = 0$ for all variations of $u$ if and only if $u(\Sigma)$ is a free-boundary minimal surface. For an excellent recent survey, see \cite{li2019free}.

\indent Generalizing results of Nitsche \cite{nitsche1985stationary} and Souam \cite{souam1997stability}, Fraser and Schoen \cite{fraser2015uniqueness} proved that a free-boundary minimal disk in a geodesic ball in a real space form is totally geodesic.  In this note, we ask whether an analogous uniqueness statement holds in complex space forms.  We show:  

\begin{thm} \label{thm:Main}
Let $u \colon \Sigma^2 \to B^{2n}$ be a compact orientable free-boundary minimal surface in a geodesic ball in a complex space form of real dimension $2n$. \\ 
\indent (a) Suppose $n \geq 2$.  If $u(\Sigma)$ has K\"{a}hler angle $\frac{\pi}{2}$ or $\frac{3\pi}{2}$, then $u(\Sigma)$ is superminimal. \\
\indent (b) Suppose $n = 2$.  If $u(\Sigma)$ is Lagrangian, then $u(\Sigma)$ is totally geodesic.
\end{thm}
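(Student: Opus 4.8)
The plan is to reduce both parts to the vanishing of a single holomorphic differential on $\Sigma$ manufactured from the trace-free second fundamental form, and then to force that vanishing using the free-boundary condition together with the radial geometry of the geodesic ball. Fix a conformal coordinate $z$ for the induced metric and record the K\"ahler angle as a function $\alpha$ via $u^*\omega = \cos\alpha\,dA_\Sigma$. In case (b) the Lagrangian hypothesis $\alpha \equiv \tfrac{\pi}{2}$ makes $J$ map $T\Sigma$ isomorphically onto $N\Sigma$, so the cubic form $C(X,Y,W) = \langle II(X,Y), JW\rangle$ is totally symmetric and trace-free, and $C \equiv 0$ is equivalent to $II \equiv 0$, i.e.\ to total geodesy; I would take $\Xi$ to be its $(3,0)$-part, a section of $K_\Sigma^{3}$. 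In case (a), with $\alpha \equiv \tfrac{\pi}{2}$ but $n \geq 2$ arbitrary, I would instead use the quartic differential $\Psi = \langle II^{2,0}, II^{2,0}\rangle\,dz^{4}$, a section of $K_\Sigma^{4}$ built from the $(2,0)$-part $II^{2,0} = II(\partial_z,\partial_z)$ of the second fundamental form, whose vanishing is exactly the first isotropy condition and, under constant K\"ahler angle, the superminimality of $u(\Sigma)$.

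The first substantive step is to show that these differentials are holomorphic. This is where both minimality and the \emph{constant holomorphic sectional curvature} of the ambient space form are used: differentiating $C$ (resp.\ $\langle II^{2,0}, II^{2,0}\rangle$) in the $\bar z$-direction and invoking the Codazzi equation, the mean-curvature trace terms drop out by minimality, and the ambient curvature correction reduces, via the explicit complex-space-form curvature tensor, to terms proportional to $\omega$ evaluated on \emph{tangent} vectors. Because $\alpha \equiv \tfrac{\pi}{2}$ these terms vanish, leaving $\bar\partial \Xi = 0$ (resp.\ $\bar\partial\Psi = 0$). I want to stress that the constancy of the K\"ahler angle is essential here: it is precisely what upgrades an inhomogeneous $\bar\partial$-equation, whose source is a multiple of $d\alpha$, into genuine holomorphicity, and what makes $\Xi$ and $\Psi$ globally well-defined.

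Next I would convert the free-boundary condition into a boundary condition for the differential. Orthogonality of $u(\Sigma)$ to the geodesic sphere $\partial B$ forces the outward conormal $\mu$ of $\Sigma$ to coincide with the radial field $\partial_r$ along $\partial\Sigma$, while the unit boundary tangent $T_\partial$ lies in $T(\partial B)$. Substituting this frame into the cubic form and using the known second fundamental form of a geodesic sphere in a complex space form (a Hopf hypersurface, with the structure direction $J\partial_r$ singled out), I expect to find that $\Xi$, and likewise $\Psi$, takes \emph{real} values when all arguments are $T_\partial$ --- that is, the differential is real along $\partial\Sigma$ in the sense that $\Xi(T_\partial,T_\partial,T_\partial) \in \mathbb{R}$. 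Thus in each case the relevant object is a holomorphic section of $K_\Sigma^{p}$ ($p=3$ or $4$) that is real on the boundary.

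Finally I would deduce that the differential vanishes identically. For the disk this is immediate: reflect across $\partial\Sigma$ to the double $\mathbb{CP}^1$, extend the section holomorphically by the reality condition, and note that $K^{p}$ has degree $-2p < 0$ on $\mathbb{CP}^1$ and so admits only the zero section. \textbf{The main obstacle is to handle $\Sigma$ of arbitrary genus}, where the double carries an abundance of holomorphic $p$-differentials and this topological shortcut collapses. Here I would exploit the feature that has so far only entered through the boundary, namely that $B$ is a geodesic \emph{ball} with its radial structure: pairing the holomorphic differential against the radial field in a Rellich--Pohozaev-type integral identity, the free-boundary relation $\mu = \partial_r$ kills the boundary integral, which reduces the identity to $\int_\Sigma |\Xi|^2\,(\text{weight}) \le 0$ and hence $\Xi \equiv 0$ regardless of genus. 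The crux of the whole argument is to construct this identity so that the ambient curvature terms --- present because the space form is not flat --- enter with the correct sign, ensuring the interior integrand is nonnegative while the boundary integrand is annihilated by the free-boundary condition, so that the genus never appears. Tracing $\Xi \equiv 0$ (resp.\ $\Psi \equiv 0$) back through the dictionary then yields total geodesy in case (b) and superminimality in case (a).
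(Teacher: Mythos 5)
Your treatment of part (b) is sound in outline---for a Lagrangian surface your cubic form $C(X,Y,W)=\langle \mathrm{I\!I}(X,Y),JW\rangle$ has $(3,0)$-part proportional to the Chern--Wolfson form $P$, which is exactly the object the paper uses---but part (a) rests on the wrong differential. In the adapted frame $(\nu,T,U,N,V_1,\dots,V_{2n-4})$, write $\mathrm{I\!I}(e_1,e_1)=a_{11}U+b_{11}N+\sum h^\lambda_{11}V_\lambda$ and $\mathrm{I\!I}(e_1,e_2)=a_{12}U+b_{12}N+\sum h^\lambda_{12}V_\lambda$. The coefficient of your quartic $\Psi=\langle \mathrm{I\!I}^{2,0},\mathrm{I\!I}^{2,0}\rangle\,dz^4$ is then, up to a constant,
\[
(a_{11}-ia_{12})^2+(b_{11}-ib_{12})^2+\sum_\lambda \bigl(h^\lambda_{11}-ih^\lambda_{12}\bigr)^2,
\]
and the first two terms factor as
\[
\bigl[(a_{11}+b_{12})-i(a_{12}-b_{11})\bigr]\cdot\bigl[(a_{11}-b_{12})-i(a_{12}+b_{11})\bigr]
=\bigl[d\theta(e_1)-i\,d\theta(e_2)\bigr]\cdot\bigl[\text{the coefficient of }P\text{, up to }\tfrac14\sin\theta\bigr].
\]
When the K\"ahler angle is constant the first factor vanishes identically, so under your hypothesis $\theta\equiv\pi/2$ the $(U,N)$-part of $\Psi$ cancels completely and $\Psi$ sees only the $V_\lambda$-components of $\mathrm{I\!I}$. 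Consequently $\Psi\equiv 0$ neither implies nor is implied by superminimality ($P\equiv 0$, which under constant angle means $a_{11}=b_{11}=a_{12}=b_{12}=0$). Already for $n=2$ one has $\Psi\equiv 0$ automatically for \emph{every} minimal Lagrangian surface---including the Clifford torus in $\mathbb{CP}^2$, which is not superminimal---so in part (a) your argument proves nothing; you must work with the holomorphic cubic form $P$ for all $n\geq 2$.

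The second gap is the endgame. You derive only that the differential is \emph{real} along $\partial\Sigma$, which forces you into doubling (disk only) and then a Rellich--Pohozaev-type identity for arbitrary genus that you never construct; that identity is the crux of your plan, and there is no evidence it exists with the signs you need. What you are missing is that the free-boundary condition combined with the Hopf property of geodesic spheres yields a much stronger boundary condition. Since $-J\nu$ is principal, with $A(J\nu)=aJ\nu$ and $A(V)=\lambda V$ for $V\perp J\nu$, one computes along $\partial\Sigma$ (where $\nu=u_*(e_1)$) that $\mathrm{I\!I}(e_1,e_2)=(A(T))^{N\Sigma}=(\lambda-a)\cos\theta\sin\theta\,N$, which vanishes when $\cos\theta=0$; thus $a_{12}=b_{12}=0$ on $\partial\Sigma$, and the constant-angle relations $a_{11}=-b_{12}$, $b_{11}=a_{12}$ then force $a_{11}=b_{11}=0$ there as well. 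Hence $P$ (equivalently your $\Xi$) vanishes \emph{pointwise} along the boundary curve, not merely takes real values, and a holomorphic section of $K_\Sigma^3$ vanishing on a boundary arc vanishes identically by the identity theorem---no doubling, no integral identity, and no genus hypothesis ever enters. With the correct differential and this boundary computation, part (a) follows for all $n\geq 2$, and part (b) follows by noting that for $n=2$ a Lagrangian with $P\equiv 0$ has $\mathrm{I\!I}\equiv 0$.
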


We emphasize that the hypotheses of Theorem \ref{thm:Main} involve no assumption on the topology of $\Sigma$ beyond the basic requirement that $\Sigma$ be a compact orientable surface with boundary.  In particular, we make no \textit{a priori} assumption on the genus or number of boundary components of $\Sigma$. \\ 

\indent Our proof will be complex-analytic, similar in spirit to Fraser and Schoen's arguments in \cite{fraser2015uniqueness}.  Now, in \cite{fraser2015uniqueness}, the constant sectional curvature assumption in used in two places.  First, thanks to the Codazzi equation, minimal surfaces in real space forms admit a natural holomorphic quartic form $Q$.  Second, to demonstrate that $Q$ vanishes, Fraser and Schoen use the fact that geodesic spheres in real space forms are totally umbilic. \\
\indent In our situation, by contrast, the complex space forms $\mathbb{CP}^n$ and $\mathbb{CH}^n$ admit no totally-umbilic hypersurfaces whatsoever.  To compensate for this, we will instead exploit the fact that geodesic spheres $S$ in complex space forms are \textit{Hopf hypersurfaces}, by which we mean that the complex structure endomorphism applied to a unit normal vector to $S$ is principal. \\
\indent Now, in place of the holomorphic quartic form $Q$, we analyze a certain holomorphic \textit{cubic} form $P$ introduced in the 1983 papers of Eells and Wood \cite{eells1983harmonic} and Chern and Wolfson \cite{chern1983minimal}.  The cubic form $P$ has since been used in several studies of harmonic maps and minimal surfaces: see, e.g., \cite{wolfson1985minimal}, \cite{wolfson1989minimal}, \cite{eells1985twistorial}, \cite{ma2005totally}, \cite{castro1997twistor}, \cite{castro1995twistor}. \\
\indent We will define $P$ precisely in (\ref{eq:HoloCubicForm}).  For now, note that a minimal surface is called \textit{superminimal} if $P = 0$ on the surface.  In $\mathbb{CP}^2$, there exists a great variety of compact superminimal surfaces \cite{eells1985twistorial}, \cite{chern1983minimal}.  In $\mathbb{CP}^n$, every superminimal surface can be constructed from holomorphic curves \cite{wolfson1985minimal}, which explains the significance of Theorem 1.1(a).

\indent Theorem 1.1(b) is interesting in view of the abundance of Lagrangian minimal surfaces in $\mathbb{CP}^2$, $\mathbb{C}^2$ and $\mathbb{CH}^2$.  Heuristically, the rigidity we observe can be explained as follows.  In a K\"{a}hler $4$-manifold, a minimal Lagrangian $u \colon \Sigma \to B^4$ has only two independent component functions in its second fundamental form.  Along $\partial \Sigma$, the free-boundary condition together with the Hopfness of the geodesic sphere $\partial B$ imposes two constraints on these two functions, which forces the second fundamental form of $u(\Sigma)$ to vanish along $\partial \Sigma$.   

\begin{rmk}
In $\mathbb{CH}^2$, it is likely that Theorem 1.1(b) is still true if ``geodesic ball" is replaced by ``horoball" --- the domain whose boundary is the other Hopf hypersurface in $\mathbb{CH}^2$ with exactly two distinct constant principal curvatures \cite{montiel1985real} --- but I have not checked the details.
\end{rmk}

\begin{rmk} After this work was completed, I learned of the recent paper \cite{li2020lagrangian} of Mingyang Li, Guofang Wang, and Liangjun Weng.  As we now explain, our Theorem 1.1(b) is similar to the main result of \cite{li2020lagrangian}, although neither implies the other.  Moreover, both works center on showing the vanishing of the holomorphic cubic form $P$. \\
\indent The main result of \cite{li2020lagrangian} asserts that a Lagrangian minimal \textit{disk} in a geodesic ball of $M^4 = \mathbb{C}^2$ with \textit{Legendrian boundary} and \textit{constant contact angle} is totally geodesic.  In the context of Lagrangians in $4$-manifolds, the free-boundary condition is a special case of the more general ``Legendrian boundary and constant contact angle" condition.  Thus, compared with our Theorem 1.1(b), the rigidity result of \cite{li2020lagrangian} entails a stricter topological assumption, but a more general boundary condition.  The authors in \cite{li2020lagrangian} also note that their result is still true if the ambient manifold is any complex space form, though the details are left to the reader. \\
\indent Their article also contains interesting examples of Lagrangians in $\mathbb{C}^2$ satisfying the Legendrian boundary condition.  Their examples do not include a minimal Lagrangian annulus satisfying the \textit{free}-boundary condition, leading the authors to conjecture that no such annulus exists.  Our Theorem 1.1(b) establishes this conjecture.
\end{rmk}

\noindent \textbf{Acknowledgements:} I thank David Wiygul for teaching me about free-boundary minimal surfaces, and thank Pat Ryan for sharing with me his beautiful book on hypersurfaces with Thomas Cecil \cite{cecil2015geometry}, which greatly aided in the preparation of this work.  I thank Rick Schoen, Wei-Bo Su, and Chung-Jun Tsai for clarifying conversations that led to a strengthening of Theorem 1.1, and Gavin Ball, Da Rong Cheng, Jih-Hsin Cheng, and Spiro Karigiannis for their interest and encouragement.

Part of this work was completed during the author's postdoctoral fellowship at the National Center for Theoretical Sciences (NCTS) at National Taiwan University; I thank the Center for their support.

\section{Proof of Main Result}

\indent \indent Let $M$ be a complex space form of real dimension $2n$, so that $M$ is $\mathbb{CP}^n, \mathbb{C}^n$, or $\mathbb{CH}^n$ equipped with a metric $\langle \cdot, \cdot \rangle$ of constant holomorphic sectional curvature.  Let $\overline{\nabla}$ denote the Levi-Civita connection of $\langle \cdot, \cdot \rangle$, let $J$ denote the ($\overline{\nabla}$-parallel) complex structure on $M$, and let $\Omega( \cdot, \cdot) = \langle J \cdot, \cdot \rangle$ denote the K\"{a}hler form on $M$. \\

\indent Let $B$ denote a geodesic ball in $M$, and let $S = \partial B$ denote its boundary sphere.  Let $\nu$ denote the outward-pointing unit normal vector field to $S$.  Let $A \colon TS \to TS$ denote the shape operator of $S$, by which we mean
$$A(X) = \overline{\nabla}_X \nu$$
We emphasize that $S$ is \textit{not} totally-umbilic.  Indeed, $S$ has two distinct (constant) principal curvatures \cite{cecil2015geometry}, one of multiplicity 1, and one of multiplicity $(2n-2)$.  Moreover, $S$ is a \textit{Hopf hypersurface}, meaning that the (Reeb) vector field $-J\nu$ is principal \cite{cecil2015geometry}.  We denote the (multiplicity $1$) principal curvature of $-J\nu$ by $a$ and the multiplicity $(2n-2)$ principal curvature by $\lambda$, so that
\begin{align*}
A(J\nu) & = aJ\nu \\
A(V) & = \lambda V, \ \ \text{ for all }V \in TS \text{ with } V \perp J\nu.
\end{align*}
For more on geodesic spheres in complex space forms, the reader might consult \cite{cecil2015geometry}, \cite{niebergall1997real}, \cite{maeda2000geometry}, \cite{maeda2018geometry}. \\

\indent Let $u \colon \Sigma \to B$ be a compact orientable free-boundary minimal surface, equip $\Sigma$ with an orientation, and let $\theta$ denote the K\"{a}hler angle of the immersion.  The bundle of vector fields along $u(\Sigma)$ decomposes as $u^*(TM) = u_*(T\Sigma) \oplus N\Sigma$, and we denote the second fundamental form of $u(\Sigma)$ as
$$\text{I\!I}(X,Y) = (\overline{\nabla}_XY)^{N\Sigma}$$
where the superscript $N\Sigma$ denotes the projection $u^*(TM) \to N\Sigma$. \\ 

\indent For calculations, we now define a local frame that is adapted to the geometry of $u \colon \Sigma \to B$.  To begin, let $(e_1, e_2)$ be a local oriented orthonormal frame defined in a neighborhood $W$ of a point on $\partial \Sigma$ such that $\nu = u_*(e_1)$ along $\partial \Sigma$.  Extend $\nu$ to a vector field on $W$ by requiring
$$\nu = u_*(e_1)$$
and set
\begin{align*}
T & = u_*(e_2).
\end{align*}
At points $p \in W$, let $\mathcal{D} = \text{span}(\nu, J\nu)^\perp$ denote the ($J$-invariant) real $(2n-2)$-plane field orthogonal to the real $2$-plane field $\text{span}(\nu, J\nu)$.  So, both $\mathcal{D}_p$ and $N_p\Sigma$ are $(2n-2)$-planes inside the $(2n-1)$-plane $\text{span}(\nu)^\perp$.  At points where $\sin(\theta) \neq 0$, the intersection $N_p\Sigma \cap \mathcal{D}_p$ is a $(2n-3)$-plane.  However, at points where $\sin(\theta) = 0$, we have $N_p\Sigma = \mathcal{D}_p$.

\begin{rmk} If $u$ is minimal and not $\pm$-holomorphic, the set of points at which $\sin(\theta) = 0$ is discrete.  See \cite{chern1983minimal}: $\S$2.
\end{rmk}

\indent Let $\{U, JU, V_1, \ldots, V_{2n-4}\}$ be a unitary basis for $\mathcal{D}$ with the property that
$$U, V_1, \ldots, V_{2n-4} \in N\Sigma \cap \mathcal{D}.$$
Thus, $(\nu, J\nu, U, JU, V_1, \ldots, V_{2n-4})$ is a local unitary frame along $u(\Sigma)$.  In terms of this frame, we can write $T = c_1 J\nu + c_2JU$ for some functions $c_1, c_2$ satisfying $(c_1)^2 + (c_2)^2 = 1$.  Since $\cos(\theta) = \Omega(\nu, T) = \Omega(\nu, c_1 J\nu + c_2 JU) = c_1$, it follows that $c_2 = \pm \sin(\theta)$.  Now, $U$ has only been specified up to sign: we choose the sign such that $c_2 = -\sin(\theta)$.  Thus,
$$T = \cos(\theta) J\nu - \sin(\theta) JU.$$
Finally, let $N$ denote the vector field
$$N = -\sin(\theta) J\nu - \cos(\theta) JU.$$
One can check that $\{U, N, V_1, \ldots, V_{2n-4}\}$ is an orthonormal basis of each normal space $N_p\Sigma$.  The upshot is that
\begin{equation} \label{eq:frame}
(\nu, T, U, N, V_1, \ldots, V_{2n-4})
\end{equation}
is a local orthonormal frame adapted to the free-boundary surface $u \colon \Sigma \to B$. \\


\indent We now express the second fundamental form of $u(\Sigma)$ in terms of the frame (\ref{eq:frame}), writing
\begin{align*}
\text{I\!I}(e_1, e_1) & = \textstyle a_{11} U + b_{11} N + \sum h^\lambda_{11} V_\lambda \\ 
\text{I\!I}(e_1, e_2) & = \textstyle a_{12} U + b_{12} N + \sum h^\lambda_{12} V_\lambda \\
\text{I\!I}(e_2, e_2) & = -\text{I\!I}(e_1, e_1)
\end{align*}
where $a_{11}, a_{12}, b_{11}, b_{12}$ and $h^\lambda_{11}, h^\lambda_{12}$ are functions, and $1 \leq \lambda \leq 2n-4$.  In this notation, we consider the cubic form $P$ given by
\begin{equation} \label{eq:HoloCubicForm}
P = \frac{1}{4}\sin(\theta) \left[ ( a_{11} - b_{12} ) - i( a_{12} + b_{11} ) \right] \phi^3
\end{equation}
where $\phi = \epsilon_1 + i\epsilon_2 \in \Omega^{1,0}(\Sigma)$, and $(\epsilon_1, \epsilon_2)$ is the coframe field dual to $(e_1, e_2)$.  In Theorem 2 of \cite{chern1983minimal}, Chern and Wolfson show that if $u(\Sigma)$ is a minimal surface in a complex space form, then $P$ is holomorphic. \\

\indent We can now establish two lemmas.  The first is essentially a rephrasing of equation (2.30) in \cite{chern1983minimal}, which we prove here for the sake of being self-contained.  It shows, in particular, that minimal surfaces of constant K\"{a}hler angle have extra symmetries in their second fundamental forms.

\begin{lem} \label{lem:KahlerAngle}
For any tangent vector $X \in T\Sigma$, we have:
$$d\theta(X) = \langle \mathrm{I\!I}(X,e_2), N \rangle + \langle \mathrm{I\!I}(X,e_1), U \rangle $$
In particular,
\begin{align*}
d\theta(e_1) & = a_{11} + b_{12} \\
d\theta(e_2) & = a_{12} - b_{11} \\
\end{align*}
\end{lem}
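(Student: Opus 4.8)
The plan is to differentiate the defining relation for the K\"{a}hler angle and read off the second-fundamental-form terms. Recall that $\cos(\theta) = \Omega(\nu, T) = \langle J\nu, T\rangle$, where $\nu = u_*(e_1)$ and $T = u_*(e_2)$. First I would apply $X$ to both sides: on the right, since $\overline{\nabla} J = 0$ and $\overline{\nabla}$ is metric-compatible, this gives $-\sin(\theta)\, d\theta(X) = \langle J\overline{\nabla}_X\nu, T\rangle + \langle J\nu, \overline{\nabla}_X T\rangle$. I would then insert the Gauss decompositions $\overline{\nabla}_X\nu = \omega(X)\, T + \mathrm{I\!I}(X,e_1)$ and $\overline{\nabla}_X T = -\omega(X)\,\nu + \mathrm{I\!I}(X,e_2)$, where $\omega(X) = \langle \overline{\nabla}_X\nu, T\rangle$ is the connection $1$-form of $(e_1,e_2)$ on $\Sigma$; the would-be $\nu$- and $T$-self-coefficients vanish by orthonormality, and the two tangential coefficients are negatives of one another by skew-symmetry of the induced connection.

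The key simplification is that the two tangential terms carrying $\omega(X)$ are $\omega(X)\langle JT, T\rangle$ and $-\omega(X)\langle J\nu, \nu\rangle$, both of which vanish because $J$ is skew-symmetric with respect to the metric. This is the crux: it is precisely the skew-symmetry of $J$ that removes the intrinsic connection of $\Sigma$ and leaves a formula purely in terms of $\mathrm{I\!I}$. What remains is $-\sin(\theta)\,d\theta(X) = \langle J\,\mathrm{I\!I}(X,e_1), T\rangle + \langle J\nu, \mathrm{I\!I}(X,e_2)\rangle$. Using skew-symmetry on the first pairing and symmetry of the inner product on the second converts these into $-\langle \mathrm{I\!I}(X,e_1), JT\rangle + \langle \mathrm{I\!I}(X,e_2), J\nu\rangle$. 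From the frame definitions I would record the identities $JT = -\cos(\theta)\,\nu + \sin(\theta)\,U$ and $J\nu = \cos(\theta)\,T - \sin(\theta)\,N$; since $\mathrm{I\!I}(X,\cdot)$ is normal, only the $U$- and $N$-components survive the pairings, yielding $-\sin(\theta)\,d\theta(X) = -\sin(\theta)\bigl[\langle \mathrm{I\!I}(X,e_1),U\rangle + \langle\mathrm{I\!I}(X,e_2),N\rangle\bigr]$. Dividing by $-\sin(\theta)$ on the set where $\sin(\theta)\neq 0$ --- which is exactly where the adapted frame $U, N$ is defined --- gives the first assertion.

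For the ``in particular'' statement I would simply set $X = e_1$ and $X = e_2$ and expand using the component notation for $\mathrm{I\!I}$, together with minimality in the form $\mathrm{I\!I}(e_2,e_2) = -\mathrm{I\!I}(e_1,e_1)$ and the symmetry $\mathrm{I\!I}(e_2,e_1) = \mathrm{I\!I}(e_1,e_2)$. Taking $U$- and $N$-components then reads off $d\theta(e_1) = a_{11} + b_{12}$ and $d\theta(e_2) = a_{12} - b_{11}$ directly. The only genuine subtlety --- and the step I would be most careful about --- is the cancellation of the $\omega(X)$ terms and the subsequent division by $\sin(\theta)$: the former is what makes the identity frame-independent and purely extrinsic, while the latter requires noting that the statement is intrinsically about the region $\sin(\theta)\neq 0$, the degenerate points being discrete by the remark above.
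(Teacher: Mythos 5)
Your main computation is correct and follows essentially the same route as the paper: differentiate $\cos\theta = \langle T, J\nu\rangle$, use $\overline{\nabla}J = 0$, metric compatibility, and skew-symmetry of $J$, discard the tangential terms, and read off the $U$- and $N$-components of $\mathrm{I\!I}$ via $JT = -\cos(\theta)\nu + \sin(\theta)U$ and $J\nu = \cos(\theta)T - \sin(\theta)N$. (The paper kills the tangential terms using $\langle\overline{\nabla}_XT,T\rangle = \langle\overline{\nabla}_X\nu,\nu\rangle = 0$ rather than your explicit connection form $\omega$, but that difference is cosmetic.)

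The genuine gap is your final step, the division by $\sin\theta$. The lemma asserts the identity at every point where the adapted frame is defined, including points where $\sin\theta = 0$, and your fallback --- that the degenerate points are discrete --- does not close this. First, discreteness holds only when $u$ is minimal and \emph{not} $\pm$-holomorphic; if $u$ is $\pm$-holomorphic then $\sin\theta \equiv 0$ and your computation reduces to $0=0$ everywhere, giving no information. Second, even in the non-holomorphic case, extending the identity across isolated zeros of $\sin\theta$ requires a continuity argument (both sides are continuous and agree on a dense set), which you gesture at but never make; saying the statement ``is intrinsically about the region $\sin\theta \neq 0$'' weakens the lemma rather than proving it. This is not a pedantic point for this paper: the remark following the proof of Theorem 1.1 applies the lemma to $\pm$-holomorphic free-boundary surfaces, i.e.\ precisely at points where $\sin\theta \equiv 0$, so the full statement is actually used.

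The paper's fix is a second, completely parallel computation: differentiate $\langle T, JU\rangle = -\sin(\theta)$ instead, which yields $\cos(\theta)\,d\theta(X) = \cos(\theta)\left[\langle \mathrm{I\!I}(X,e_2),N\rangle + \langle \mathrm{I\!I}(X,e_1),U\rangle\right]$, establishing the identity wherever $\cos\theta \neq 0$. Since $\sin\theta$ and $\cos\theta$ cannot vanish simultaneously, the two computations together cover all points. Appending this second calculation (or, in the non-holomorphic case, an explicit continuity argument) would complete your proof.
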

\begin{proof} By differentiating $\langle T, J\nu \rangle = \cos(\theta)$, we find that
\begin{align*}
-\sin(\theta) d\theta(X) = \overline{\nabla}_X(\cos(\theta)) & = \overline{\nabla}_X \langle T, J\nu \rangle \\
& = \langle \overline{\nabla}_XT, J\nu\rangle + \langle \overline{\nabla}_X(J\nu), T  \rangle \\
& = \langle \overline{\nabla}_XT, J\nu\rangle - \langle \overline{\nabla}_X\nu, JT  \rangle \\
& = \langle \overline{\nabla}_XT, \cos(\theta)T - \sin(\theta) N \rangle - \langle \overline{\nabla}_X \nu, -\cos(\theta)\nu + \sin(\theta)U \rangle \\
& = -\sin(\theta) \langle \overline{\nabla}_XT, N \rangle - \sin(\theta)\langle \overline{\nabla}_X\nu, U \rangle.
\end{align*}
Thus,
$$\sin(\theta) d\theta(X) = \sin(\theta) \left[ \langle \text{I\!I}(X,e_2), N \rangle + \langle \text{I\!I}(X,e_1), U \rangle  \right]\!.$$
This establishes the claim at points where $\sin(\theta) \neq 0$.  By a completely analogous calculation, differentiating $\langle T, JU \rangle = -\sin(\theta)$ yields
$$\cos(\theta)d\theta(X) = \cos(\theta) \left[ \langle \text{I\!I}(X, e_2), N \rangle +  \langle \text{I\!I}(X,e_1), U \rangle \right]\!,$$
which establishes the claim at points where $\cos(\theta) \neq 0$.
\end{proof}

We now exploit the free-boundary condition and the Hopfness of $\partial B$.  The following quick calculation is the analogue of equation (2.5) in \cite{fraser2015uniqueness}.

\begin{lem} \label{lem:ShapeOp}
 Along $\partial \Sigma$, we have
$$\mathrm{I\!I}(e_1, e_2) = (\lambda-a) \cos(\theta) \sin(\theta) N.$$
\end{lem}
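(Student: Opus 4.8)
The plan is to exploit the symmetry of the second fundamental form to reinterpret $\mathrm{I\!I}(e_1, e_2)$ as a derivative of the sphere normal along the boundary curve, and then to evaluate that derivative using the shape operator $A$ of $S$. Since $\mathrm{I\!I}$ is symmetric and, along $\partial\Sigma$, the vector field $\nu = u_*(e_1)$ agrees with the outward unit normal of $S$, I would first observe that
$$\mathrm{I\!I}(e_1, e_2) = \left(\overline{\nabla}_{e_2}\, u_*(e_1)\right)^{N\Sigma} = \left(\overline{\nabla}_T\, \nu\right)^{N\Sigma} \qquad \text{along } \partial\Sigma.$$
The point is that $e_2$ is tangent to $\partial\Sigma$, so the covariant derivative $\overline{\nabla}_{e_2}(u_* e_1)$ only sees the values of $u_*(e_1) = \nu$ along the boundary curve, where they coincide with the sphere normal. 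Because $T = u_*(e_2)$ is tangent to $S$ (this is precisely the free-boundary orthogonality condition), I may then replace $\overline{\nabla}_T \nu$ by $A(T)$, the shape operator of $S$ applied to $T$.

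The next step is to compute $A(T)$ using the Hopf property. Writing $T = \cos(\theta) J\nu - \sin(\theta) JU$, I would verify that both $J\nu$ and $JU$ lie in $TS$ and are principal directions: $J\nu$ is principal with eigenvalue $a$ by the Hopf hypothesis, while $JU$ satisfies $JU \perp \nu$ and $JU \perp J\nu$ (each following from $U \in \mathcal{D}$), so $JU \in TS$ is $\lambda$-principal. Hence $A(T) = a\cos(\theta)\,J\nu - \lambda\sin(\theta)\,JU$.

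Finally, I would re-express this in the adapted frame. Inverting the relations defining $T$ and $N$ gives $J\nu = \cos(\theta) T - \sin(\theta) N$ and $JU = -\sin(\theta) T - \cos(\theta) N$, whence
$$A(T) = \left(a\cos^2(\theta) + \lambda\sin^2(\theta)\right) T + (\lambda - a)\sin(\theta)\cos(\theta)\, N.$$
Projecting onto $N\Sigma$ annihilates the tangential term in $T$ and leaves exactly $(\lambda-a)\cos(\theta)\sin(\theta)\, N$, which is the desired identity. The only delicate point is the first step: one must be sure that differentiating $u_*(e_1)$ tangentially along $\partial\Sigma$ genuinely produces the shape operator of $S$ rather than some derivative of the ambient extension of $\nu$; this is legitimate precisely because $u_*(e_1)$ equals the sphere normal all along $\partial\Sigma$ and $e_2$ differentiates in a direction tangent to that curve. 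Everything else is the bookkeeping of expanding $A(T)$ in the frame $(\nu, T, U, N, V_1, \ldots, V_{2n-4})$.
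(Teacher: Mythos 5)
Your proof is correct and takes essentially the same route as the paper: identify $\mathrm{I\!I}(e_1,e_2)$ with $(A(T))^{N\Sigma}$ via the free-boundary condition, expand $A(T)$ using the Hopf property of $S$, and project onto $N\Sigma$. The extra care you take in justifying $\overline{\nabla}_T \nu = A(T)$ (that the derivative along $\partial\Sigma$ sees only boundary values of $\nu$, where the surface extension and the sphere normal agree) and in checking that $JU$ is a $\lambda$-principal direction simply makes explicit steps the paper leaves implicit.
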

\begin{proof}
We compute
\begin{align*}
A(T) & = A( \cos(\theta) J\nu - \sin(\theta) JU) \\
& = a \cos(\theta) J\nu - \lambda \sin(\theta) JU \\
& = a \cos(\theta) \left(\cos(\theta)T - \sin(\theta)N \right) + \lambda \sin(\theta) \left(\sin(\theta)T + \cos(\theta) N \right) \\
& = \left(a \cos^2(\theta) + \lambda \sin^2(\theta) \right)T + (\lambda-a) \cos(\theta) \sin(\theta)N.
\end{align*}
Consequently,
$$\text{I\!I}(e_1, e_2) = (\overline{\nabla}_T \nu)^{N\Sigma} = (A(T))^{N\Sigma} = (\lambda-a) \cos(\theta) \sin(\theta) N.$$
\end{proof}

 We now prove Theorem \ref{thm:Main}.

 \begin{proof} (a) Let $u \colon \Sigma^2 \to B^{2n}$ be a free-boundary minimal surface in a geodesic ball $B$, where $n \geq 2$.  Suppose that $u(\Sigma)$ has K\"{a}hler angle $\theta = \frac{\pi}{2}$ or $\theta = \frac{3\pi}{2}$, so $\cos(\theta) = 0$.  Since $d\theta = 0$, Lemma \ref{lem:KahlerAngle} gives
\begin{align*}
a_{11} + b_{12} & = 0 & a_{12} - b_{11} & = 0
\end{align*}
on all of $\Sigma$.  Now, Lemma \ref{lem:ShapeOp} shows that $a_{12} = b_{12} = 0$ along $\partial \Sigma$, so that $a_{11} = b_{11} = 0$ along $\partial \Sigma$ as well, and hence $P = 0$ along $\partial \Sigma$. \\
\indent Let $q \in \partial \Sigma$, and let $(z)$ denote a complex coordinate on a neighborhood $E$ of $q$, so that on $E$ we have $P = f(z)\,dz^3$ for some holomorphic function $f \colon E \to \mathbb{C}$.  Since $P = 0$ along $\partial \Sigma$, we have $f = 0$ on $E \cap \partial \Sigma$, so that the holomorphicity of $f$ implies that $f = 0$ on all of $E$.  Thus, $P$ vanishes on an open set of $\Sigma$, so (since $P$ is holomorphic) $P = 0$ on all of $\Sigma$, meaning that $u(\Sigma)$ is superminimal. \\

\indent (b) Suppose now that $n = 2$.  By part (a), we know that $u(\Sigma)$ is superminimal.  By Lemma \ref{lem:KahlerAngle} and (\ref{eq:HoloCubicForm}), every superminimal Lagrangian in $M^4$ is totally geodesic.  This completes the proof.
  \end{proof}

  \begin{rmk} In a complex space form of real dimension $4$, a minimal surface $u(\Sigma)$ of \textit{constant} K\"{a}hler angle $\theta$ must satisfy $\cos(\theta) = 0$ or $\sin(\theta) = 0$ (e.g., by equation (2.32) of \cite{chern1983minimal}), meaning that $u(\Sigma)$ is either Lagrangian or $\pm$-holomorphic.

  \indent We remark that if $u \colon \Sigma^2 \to B^4$ is a free-boundary minimal surface that is either Lagrangian or $\pm$-holomorphic, then its boundary $u(\partial \Sigma)$ is a geodesic in $\partial B$.  To see this, note that Lemma \ref{lem:ShapeOp} implies that $\text{I\!I}(e_1, e_2) = 0$ along $\partial \Sigma$, so $a_{12} = b_{12} = 0$ along $\partial \Sigma$.  Lemma \ref{lem:KahlerAngle} then implies $a_{11} = b_{11} = 0$, and hence (since $\dim_{\mathbb{R}}(B) = 4$) we have $\text{I\!I} = 0$ along $\partial \Sigma$, so that $u(\partial \Sigma)$ is a geodesic in $\partial B$.  

Finally, we briefly comment on the holomorphic case.  Suppose $u \colon \Sigma^2 \to B^4$ is a free-boundary holomorphic disk. Let $v \colon \Sigma \to B$ denote a holomorphic, totally-geodesic embedding of a disk as a free-boundary surface (so that $v(\Sigma)$ is a subset of $\mathbb{CP}^1$, $\mathbb{C}^1$, or $\mathbb{CH}^1$, depending on the curvature of the target).  After a holomorphic isometry, we can assume that $v$ and $u$ intersect at a point in the boundary.   Both $u(\partial \Sigma)$ and $v(\partial \Sigma)$ are integral curves of the Reeb field of $S$, so $u(\partial \Sigma) = v(\partial \Sigma)$.  By holomorphicity, it follows that $u = v$ on $\Sigma$, so $u(\Sigma)$ is totally geodesic.
 \end{rmk}

\bibliographystyle{plain}
\bibliography{FBRef}

\Addresses

\end{document}